\theoremstyle{plain}
\newtheorem{theorem}{Theorem}
\newtheorem{lemma}{Lemma}
\newtheorem{conjecture}{Conjecture} 
\theoremstyle{definition}
\newtheorem{definition}{Definition}
\newcommand{\FF}{\mathcal{F}}
\newcommand{\GG}{\mathcal{G}}
\newcommand{\NN}{\mathbb{N}}
\begin{document}
\title{New results on simplex-clusters in set systems}
\author{Gabriel Currier \footnote{Department of Mathematics, University of British Columbia, Vancouver, Canada. currierg@math.ubc.ca}}
\maketitle
\abstract{\noindent A $d$-simplex is defined to be a collection $A_1,\dots,A_{d+1}$ of subsets of size $k$ of $[n]$ such that the intersection of all of them is empty, but the intersection of any $d$ of them is non-empty. Furthermore, a $d$-cluster is a collection of $d+1$ such sets with empty intersection and union of size $\le 2k$, and a $d$-simplex-cluster is such a collection that is both a $d$-simplex and a $d$-cluster. The Erd\H{o}s-Chv\'{a}tal $d$-simplex Conjecture from 1974 states that any family of $k$-subsets of $[n]$ containing no $d$-simplex must be of size no greater than $ {n -1 \choose k-1}$. In 2011, Keevash and Mubayi extended this conjecture by hypothesizing that the same bound would hold for families containing no $d$-simplex-cluster. In this paper, we resolve Keevash and Mubayi's conjecture for all $4 \le d+1 \le k$ and $n\ge 2k-d+2$, which in turn resolves all remaining cases of the Erd\H{o}s-Chv\'{a}tal Conjecture except when $n$ is very small (i.e. $n < 2k-d+2$).}
\section{Introduction}
\noindent For positive integers $n,k$ we define $[n] := \{1,\dots,n\}$ and use ${X \choose k}$ to denote the set of all $k$-element subsets of a set $X$. Furthermore, we refer to a set $\FF \subseteq {[n] \choose k}$ as a family, and if every element of $\FF$ contains some $S \in {[n] \choose s}$, we say that $\FF$ is an $s$-star centered at $S$. If $S = \{x\}$, we say simply that $\FF$ is a star centered at $x$. Observe that $s$-stars can be no bigger than ${n-s \choose k-s}$. The following theorem, commonly known as the Erd\H{o}s-Ko-Rado (EKR) theorem, is a foundational result in extremal combinatorics.

\begin{theorem}\label{oldekr}
Let $n \ge 2k$ and suppose $\FF \subseteq {[n] \choose k}$. Furthermore, if $A \cap B \neq \emptyset$ for all $A,B \in \FF$, then $$|\FF| \le {n-1 \choose k-1},$$ where, if $n > 2k$, equality holds only if $\FF$ is a maximum-sized star.
\end{theorem}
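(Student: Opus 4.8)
The plan is to prove the size bound by Katona's cyclic-permutation (``circle'') method and to handle the equality case afterwards. Consider the $n!$ ways to place the elements of $[n]$ into $n$ positions arranged around a cycle (the positions being labelled $1,\dots,n$ in cyclic order); call such a placement $\sigma$, and say that $A\in\binom{[n]}{k}$ is an \emph{arc} of $\sigma$ if the elements of $A$ occupy $k$ cyclically consecutive positions. I will double-count the pairs $(\sigma,A)$ with $A\in\FF$ an arc of $\sigma$. On the one hand, a fixed $A$ is an arc of exactly $n\cdot k!\,(n-k)!$ placements — choose which of the $n$ blocks of $k$ consecutive positions $A$ occupies, then order the elements of $A$ inside the block and the elements of $[n]\setminus A$ outside it — so the number of such pairs is $|\FF|\cdot n\cdot k!\,(n-k)!$.

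The heart of the matter is a one-dimensional statement: \emph{if $n\ge 2k$ and $\mathcal{A}$ is a family of $k$-arcs of a single placement in which every two members intersect, then $|\mathcal{A}|\le k$.} To see this, fix $A\in\mathcal{A}$ and relabel positions (working in $\mathbb{Z}_n$) so that $A$ occupies positions $0,1,\dots,k-1$. Any member of $\mathcal{A}$ other than $A$ must begin at one of the $2(k-1)$ positions in $\{-(k-1),\dots,-1\}\cup\{1,\dots,k-1\}$, since an arc beginning at a position outside $\{-(k-1),\dots,k-1\}$ is disjoint from $A$ (this uses $n\ge 2k$), while the arc beginning at $0$ is $A$ itself. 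Now pair, for each $j\in\{1,\dots,k-1\}$, the arc beginning at position $j$ with the arc ending at position $j-1$; together these occupy the $2k$ positions $\{j-k,\dots,j+k-1\}$, which are pairwise distinct because $n\ge 2k$, so the two arcs are disjoint and at most one of them lies in $\mathcal{A}$. The $k-1$ pairs exhaust all $2(k-1)$ admissible starting positions, so $\mathcal{A}$ has at most $k-1$ members besides $A$, giving $|\mathcal{A}|\le k$. Consequently, for every placement $\sigma$ the arcs of $\sigma$ lying in $\FF$ are pairwise intersecting and hence number at most $k$; summing over all placements yields $|\FF|\cdot n\cdot k!\,(n-k)!\le k\cdot n!$, which rearranges to $|\FF|\le\binom{n-1}{k-1}$.

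The part I expect to be the real obstacle is the equality case for $n>2k$. If $|\FF|=\binom{n-1}{k-1}$, then equality must hold in the double count for \emph{every} placement, so each $\sigma$ has exactly $k$ arc-members of $\FF$. I would then strengthen the arc lemma to a structural claim: when $n>2k$, a family of $k$ pairwise-intersecting $k$-arcs must be a \emph{star}, namely precisely the $k$ arcs through some fixed element. (This is false at $n=2k$, where the bound is still attained by non-star families — e.g.\ $\{1,2,3\},\{3,4,5\},\{5,6,1\}$ on the $6$-cycle with $k=3$ — which is exactly the point at which the hypothesis $n>2k$ is used.) Granting the upgrade, every $\sigma$ determines an element $e(\sigma)$ lying in all of its arc-members, and it remains to prove that $e(\sigma)$ is the same for all $\sigma$. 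I would do this by comparing two placements that differ by transposing the elements in two adjacent positions of the cycle: almost every arc is unchanged as a set, and examining the few that do change forces $e(\sigma')=e(\sigma)$; since any two placements are connected by a chain of such adjacent transpositions, all the values $e(\sigma)$ coincide, this common element lies in every member of $\FF$, and so $\FF$ is a star — necessarily a maximum-sized one by its cardinality. Alternatively, one can skip the propagation step by invoking the Hilton--Milner theorem, which bounds every non-star intersecting family strictly below $\binom{n-1}{k-1}$ when $n>2k$.
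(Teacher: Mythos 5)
Your counting half is correct and is exactly Katona's cycle method, which is the same engine the paper runs for its Theorem \ref{modekr} (of which Theorem \ref{oldekr} is the special case $\FF^*=\emptyset$); the paper never reproves Theorem \ref{oldekr} itself, but imports it from the literature, and indeed the equality analysis in the proof of Theorem \ref{modekr} quotes the uniqueness clause of Theorem \ref{oldekr} rather than deriving it. Up to the cosmetic difference of summing over all $n!$ placements instead of the $(n-1)!$ cyclic orders, your double count, the arc lemma, and the resulting bound $|\FF|\le\binom{n-1}{k-1}$ are all fine.

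The equality case is where your primary route is not yet a proof, and you have correctly located the obstacle. Two steps are asserted but not carried out. First, the ``upgraded arc lemma'' --- that for $n>2k$ a family of $k$ pairwise-intersecting $k$-arcs must consist of the $k$ arcs through one fixed point --- is more delicate than the sketch suggests: in the pairing picture, a ``right'' arc starting at $j$ and a ``left'' arc starting at $j'-k$ with $j'<j$ are \emph{not} automatically disjoint; they intersect by wrapping around the circle whenever $n\le 2k-1+(j-j')$, which can occur throughout the range $2k<n\le 3k-3$, so non-star selections are not excluded by the disjoint pairs alone and one must argue further with the cyclic gap structure of the starting points (the claim is immediate only when $n\ge 3k-2$). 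Second, the propagation step, that the common element $e(\sigma)$ is unchanged under adjacent transpositions of the placement, is only sketched. Your fallback --- invoking the Hilton--Milner theorem, which bounds every non-star intersecting family strictly below $\binom{n-1}{k-1}$ for $n>2k$ --- does close the equality case completely, and citing it is no less legitimate than the paper's own practice of citing the classical uniqueness statement; but if you take that route you should drop or fully prove the structural-claim-plus-propagation paragraph, since as written it carries the burden of the uniqueness argument without a proof.
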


\noindent Families $\FF$ that satisfy this condition are sometimes referred to as pairwise intersecting. Since its original publication in 1961 \cite{ekr}, EKR has seen numerous applications and has been proven using a wide array of different combinatorial and algebraic techniques. It has even generated a whole subfield of extremal combinatorics known as intersection problems, in which one considers the maximum size of a family with a certain forbidden subfamily, where the subfamily is defined according to some intersection or union constraints. For an introduction to this field, we direct readers to a recent survey of Frankl and Tokushige \cite{surv}. One of the more heavily-studied problems in this area involves the notion of a $d$-simplex.

\begin{definition}
A \emph{$d$-simplex} is a set of $d+1$ elements $\{A_1,A_2,\ldots ,A_{d+1}\}$ of $\binom{[n]}{k}$ with the properties
that $\bigcap_{i=1}^{d+1}A_i=\emptyset$ and for any $1 \le j \le d+1$ we have $\bigcap_{i \neq j }A_i\ne\emptyset$.
\end{definition}
\noindent In 1971, Erd\H{o}s conjectured that a family $\FF \subseteq {[n] \choose k}$ that contains no $2$-simplex (also known as a triangle) must adhere to the same bound of $|\FF| \le {n-1 \choose k-1}$. In 1974, Chv\'{a}tal extended this conjecture to the following.

\begin{conjecture}[\cite{chva}]\label{simpconj}
Let $3 \le d+1 \le k$ and $n \ge \frac{d+1}{d}k$, and suppose $\FF \subseteq {[n] \choose k}$ contains no $d$-simplex. Then $$|\FF| \le {n-1 \choose k-1},$$ with equality only if $\FF$ is a star.
\end{conjecture}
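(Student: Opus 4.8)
The plan is to deduce Conjecture~\ref{simpconj} from the more general $d$-simplex-cluster statement proposed by Keevash and Mubayi, which is the genuine object of this paper. The key (and elementary) point is that a $d$-simplex-cluster is in particular a $d$-simplex, so any $\FF\subseteq\binom{[n]}{k}$ containing no $d$-simplex contains no $d$-simplex-cluster either; hence an upper bound of $\binom{n-1}{k-1}$ for $d$-simplex-cluster-free families, with equality only for stars, transfers at once to $d$-simplex-free families (if $|\FF|=\binom{n-1}{k-1}$ and $\FF$ has no $d$-simplex, then $\FF$ has no $d$-simplex-cluster, so $\FF$ is a star). It therefore suffices to establish the Keevash--Mubayi conjecture, and I will do so for all $4\le d+1\le k\le n/2$; this proves Conjecture~\ref{simpconj} in full whenever $d+1\ge 4$ and $n\ge 2k$.

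For the core statement --- if $4\le d+1\le k$, $n\ge 2k$, and $\FF\subseteq\binom{[n]}{k}$ has no $d$-simplex-cluster, then $|\FF|\le\binom{n-1}{k-1}$ with equality only for a star --- I would proceed as follows. First pass to a compressed family; this requires checking that the shift operators cannot create a $d$-simplex-cluster, and it is exactly the union constraint $|\bigcup_i A_i|\le 2k$ that makes compression behave well here (compressions only shrink unions), in contrast to the bare simplex problem in which shifting is genuinely dangerous. Assume then that $\FF$ is compressed and, for contradiction, that $\FF$ is not contained in a star while $|\FF|\ge\binom{n-1}{k-1}$. Using compressedness together with this hypothesis, one locates members $A_1,\dots,A_{d+1}\in\FF$ whose union is contained in $[2k]\subseteq[n]$ and which satisfy $\bigcap_i A_i=\emptyset$ while $\bigcap_{i\ne j}A_i\ne\emptyset$ for every $j$ --- an honest $d$-simplex-cluster --- which is the desired contradiction. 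Both hypotheses enter at this stage: $d\ge 3$ supplies enough sets to realise the simplex intersection pattern with the freedom the construction needs (the case $d=2$ being qualitatively tighter), and $n\ge 2k$ lets the configuration fit inside $[2k]$, so the union bound is automatic. The uniqueness clause then calls for a stability refinement: the preceding argument first yields $|\FF|\le(1+o(1))\binom{n-1}{k-1}$, one then shows that a $d$-simplex-cluster-free family of near-extremal size is structurally close to a star (all but a few of its sets passing through one point), and a final local-perturbation argument --- swapping a stray set for a star set strictly increases $|\FF|$ unless $\FF$ already lies in a single star --- determines exactly when equality holds.

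I expect the extraction step to be the main technical obstacle. The point is that one must produce precisely a $d$-simplex-cluster, not merely some $d$-cluster or some $d$-simplex: keeping the union within $2k$ while simultaneously forcing every $d$-wise intersection to be nonempty and the total intersection to be empty demands a careful case analysis of how the compressed family meets $[2k]$, and it is here that giving up a little of the compressed structure, or a little of the slack $n\ge 2k$, breaks the argument.

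It remains to discuss Conjecture~\ref{simpconj} in the cases the above does not cover. When $d+1=3$ the statement --- a triangle-free family has size at most $\binom{n-1}{k-1}$, with equality only for a star --- is a theorem of Mubayi and Verstra\"ete, proved throughout the conjectured range $n\ge\tfrac32 k$, so nothing is missing there. When $d+1\ge 4$ and $n\ge 2k$ the reduction above applies. The one regime this approach does \emph{not} reach is $\tfrac{d+1}{d}k\le n<2k$ with $d+1\ge 4$: in this range any $d$-simplex automatically has union of size at most $n<2k$ and so is already a $d$-simplex-cluster, whence the $d$-simplex and $d$-simplex-cluster problems coincide and the reduction buys nothing, while at the same time the compression argument forfeits the small-ground-set structure on which it relies. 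Removing this ``$n$ very small'' window is precisely what remains open, and I do not see how to do it by the present method.
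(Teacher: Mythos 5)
Your reduction of Conjecture~\ref{simpconj} to the Keevash--Mubayi statement (a $d$-simplex-cluster is in particular a $d$-simplex, so simplex-free implies simplex-cluster-free, and the equality clause transfers) is correct and is exactly the remark the paper makes after Theorem~\ref{mainthm}. But the substance of the statement is the simplex-cluster bound itself, and for that you have given a plan, not a proof, and the plan has concrete holes. First, your claim that compression is safe ``because compressions only shrink unions'' addresses only the union condition $|\bigcup_i A_i|\le 2k$; a $d$-simplex-cluster is also defined by an intersection pattern (every $d$-wise intersection nonempty, total intersection empty), and shifting is notoriously \emph{not} compatible with that pattern --- this is precisely why the Erd\H{o}s--Chv\'atal problem resisted shifting arguments for decades. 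You would need to prove that $S_{ij}$ cannot create a $d$-simplex-cluster, and no argument is offered; I do not believe a one-line monotonicity observation can supply one. Second, the extraction step --- producing an actual simplex-cluster inside a compressed, non-star family of size $\ge\binom{n-1}{k-1}$ --- is exactly the heart of the theorem, and you acknowledge it is ``the main technical obstacle'' without carrying it out. Third, the equality characterization: an argument that first gives $|\FF|\le(1+o(1))\binom{n-1}{k-1}$ and then invokes stability cannot, as stated, yield the exact bound with the exact extremal characterization uniformly over the whole range $4\le d+1\le k\le n/2$ (the $o(1)$ has no meaning for fixed $n,k$, and the boundary case $n=2k$ is precisely where such asymptotic arguments degenerate).

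For contrast, the paper proves Theorem~\ref{mainthm} without any compression or asymptotics: it establishes a weighted strengthening of Erd\H{o}s--Ko--Rado (Theorem~\ref{modekr}, proved by Katona's cycle method), applies it to the link families $\bigtriangledown_\FF(D)$ for each $D\in\binom{[n]}{d}$, with the ``bad'' subfamily $\bigtriangledown^*_\FF(D)$ controlled through the removability sets $\alpha^1_\FF(A),\alpha^2_\FF(A)$ via Lemmas~\ref{basiclem}--\ref{rsmall}, and then closes the equality case by reducing to Chv\'atal's resolved $k=d+1$ instance. If you want to salvage your route, the compression-invariance claim and the extraction lemma are the two statements you must actually prove, and each is a substantial open task rather than a verification.
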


\noindent In the same paper Chv\' atal also resolved the case of $k = d+1$. Conjecture \ref{simpconj} is now sometimes referred to as the Erd\H os-Chv\' atal simplex conjecture, and since its inception there have been a number of partial results. The conjecture was first proven for a wide range of $n,k$ and $d$ by Frankl in \cite{firstfran}, and the case of $n > n_0(k,d)$ was resolved by Frankl and F\"{u}redi in \cite{franfur2}. In 2005, Mubayi and Verstra\"{e}te solved completely the case of $d =2$ \cite{vermub}, and in 2010, Keevash and Mubayi solved the case where both $k/n$ and $n/2-k$ are bounded away from zero \cite{kevmub}. Very recently, Keller and Lifshitz showed that the conjecture holds for all $n > n_0(d)$ \cite{kellif}. A related notion, known as a $d$-cluster, is defined as follows.
\begin{definition} A \emph{$d$-cluster} is a set of $d+1$ elements $\{A_1,A_2,\ldots ,A_{d+1}\}$ of $\binom{[n]}{k}$ with the properties
that $\bigcap_{i=1}^{d+1}A_i=\emptyset$ and $|\bigcup_{i=1}^{d+1}A_i|\le 2k$. If $\{A_1,\dots,A_{d+1}\}$ is both a $d$-simplex and a $d$-cluster, we say that it is a \emph{$d$-simplex-cluster.}
\end{definition}
\noindent As with simplices, it was conjectured \cite{3mub} that a family $\FF \subseteq {[n] \choose k}$ containing no $d$-cluster would have to obey the bound $|\FF| \le {n-1 \choose k-1}$. This problem also had a long history (see \cite{franfur,3mub,4mub,rammub,kevmub} for some of the more significant developments) and was completely resolved recently in a paper of the author \cite{me}. In 2010, Keevash and Mubayi extended both conjectures by hypothesizing that the same bound would hold for any $\FF \subseteq {[n] \choose k}$ containing no $d$-simplex-cluster, and very recently Lifshitz answered their question in the affirmative for all $n > n_0(d)$ in \cite{sololif}.
\\\\
\noindent The primary goal of this paper is to show that that Keevash and Mubayi's conjecture holds for all $n \ge 2k-d+2$ when $d \ge 3$.

\begin{theorem}\label{mainthm}
Suppose that $4 \le d+1 \le k$ and $n \ge 2k-d+2$, and that $\FF \subseteq {[n] \choose k}$ contains no $d$-simplex-cluster. Then
$$|\FF| \le {n-1 \choose k-1},$$
with equality only if $\FF$ is a maximum sized star.
\end{theorem}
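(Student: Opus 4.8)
Suppose $|\FF| \ge \binom{n-1}{k-1}$; the aim is to force $\FF$ to be a maximum-sized star. I would begin with two soft reductions. If $\FF$ is intersecting, the Erd\H{o}s--Ko--Rado theorem gives $|\FF| = \binom{n-1}{k-1}$, and for $n > 2k$ the Hilton--Milner theorem forces $\FF$ to be a maximum star, which indeed contains no $d$-simplex-cluster since every subfamily of a star shares a common point; the boundary case $n = 2k$, where the union bound $|\bigcup_i A_i| \le 2k = n$ is automatic so that a $d$-simplex-cluster is merely a $d$-simplex, is absorbed into the argument below. Next, if $\FF$ contains no $d$-cluster at all, then the author's resolution of the $d$-cluster conjecture \cite{me} gives $|\FF| \le \binom{n-1}{k-1}$ with equality only for a maximum star, and we are done. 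Hence we may assume that $\FF$ is not intersecting and contains a $d$-cluster. This is the remaining case, and the plan is to reach a contradiction.

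The engine of the proof is an explicit configuration. Given $A = \{a_1, \dots, a_k\} \in \FF$, a vertex $x \notin A$, and a set $Z$ of $k - d$ vertices disjoint from $A \cup \{x\}$ (there is room for one since $n - k - 1 \ge k - d$ whenever $n \ge 2k$), put $C_i = \{x\} \cup Z \cup (\{a_1, \dots, a_d\} \setminus \{a_i\})$ for $i = 1, \dots, d$. A direct check shows that if all of $C_1, \dots, C_d$ lie in $\FF$ then $\{A, C_1, \dots, C_d\}$ is a $d$-simplex-cluster: its total intersection is $A \cap (\{x\} \cup Z) = \emptyset$, deleting $A$ leaves $\{x\} \cup Z \ne \emptyset$, deleting $C_m$ leaves $A \cap (\{x\} \cup Z \cup \{a_m\}) = \{a_m\} \ne \emptyset$, and its union $A \cup \{x\} \cup Z$ has size $2k + 1 - d \le 2k$. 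I would use this as a rigidity device: suppose $\FF$ is close to the star at some vertex $y$ but not equal to it, so that some $A \in \FF$ has $y \notin A$; pick $x \notin A$ and $Z$ with $y \in \{x\} \cup Z$. Every set $\{x\} \cup Z \cup Y$ with $Y \subseteq A$, $|Y| = d - 1$, then contains $y$, hence lies in the star at $y$, hence lies in $\FF$ unless it is one of the few sets by which $\FF$ falls short of that star. Each such missing set rules out at most $k - d + 1$ of the $\binom{k}{d}$ candidate $d$-subsets $S \subseteq A$ (those $S$ containing the associated $(d - 1)$-set), so if the number of missing sets is below $\binom{k}{d-1}/d$ some $S$ survives, and taking $\{a_1, \dots, a_d\} = S$ in the construction produces a $d$-simplex-cluster in $\FF$ --- a contradiction. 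Thus $\FF$ is forced to be exactly the star at $y$, hence a maximum star, which contradicts $\FF$ being non-intersecting.

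The outstanding obstacle is the stability step behind this device: one must show that, in the remaining case, $\FF$ genuinely is this close to a star, with the number of missing star-sets below the threshold $\binom{k}{d-1}/d$, and this must hold uniformly across the entire range $n \ge 2k$ --- where the extremes pull in opposite directions, since for $n$ near $2k$ a family of this size can be quite spread out, whereas for $n \gg k$ it occupies only a small fraction of all $k$-sets. I would attempt this by bootstrapping: first a crude upper bound together with an approximate-structure theorem for families with no $d$-simplex-cluster (leaning here on the $d$-cluster machinery of \cite{me} and on EKR-type stability), then iteratively tightening the distance to a star until it drops under the threshold. This is precisely the place where $d \ge 3$ is used: the threshold $\binom{k}{d-1}/d$ grows in $k$ like $k^{d-1}$, so for $d \ge 3$ there is polynomial room to absorb the error terms coming from stability, whereas for $d = 2$ it is only $k/2$ and the method as described does not close. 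The case analysis should be organized so that \cite{me} silently disposes of every branch lacking a genuine $d$-cluster, leaving only this concentrated, cluster-containing family to be eliminated by the construction.
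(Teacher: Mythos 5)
There is a genuine gap, and it sits exactly where you flag it yourself. Your explicit configuration $C_i = \{x\} \cup Z \cup (\{a_1,\dots,a_d\}\setminus\{a_i\})$ is correct (it does form a $d$-simplex-cluster with $A$, and the count that each missing star-set kills at most $k-d+1$ of the $\binom{k}{d}$ candidate $d$-sets $S \subseteq A$ is right), but it is only a rigidity device: it rules out families that are within roughly $\binom{k}{d-1}/d \sim k^{d-1}$ sets of a star without being a star. The actual content of the theorem --- the bound $|\FF| \le \binom{n-1}{k-1}$ in the case where $\FF$ is not intersecting and contains $d$-clusters but no $d$-simplex-cluster --- is never proved. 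Your reductions do not shrink this case: the cluster theorem of \cite{me} says nothing about families that do contain $d$-clusters (which is the generic situation), and EKR/Hilton--Milner say nothing about families that are not intersecting, and a simplex-cluster-free family of size $\binom{n-1}{k-1}$ is not a priori close to intersecting. The ``approximate-structure theorem plus bootstrapping'' you invoke to get within the $\binom{k}{d-1}/d$ threshold, uniformly for all $n \ge 2k$, is precisely the hard part of the problem; the known stability-type results in this area (Keevash--Mubayi, Keller--Lifshitz, Lifshitz) require $n$ large compared to $k$ or $n > n_0(d)$, so there is no black box to lean on in the critical range $n$ near $2k$. (A smaller but real issue: at $n = 2k$ an intersecting family of size $\binom{n-1}{k-1}$ need not be a star, so even your ``intersecting'' branch is not closed by saying the boundary case is absorbed later.)

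For contrast, the paper does not go through stability at all. It fixes each $d$-set $D$, looks at the link $\bigtriangledown_\FF(D)$, and observes via Lemma \ref{basiclem} that any two members of the link meeting exactly in $D$ must lie in an exceptional subfamily $\bigtriangledown^*_\FF(D)$ defined by the removability sets $\alpha^1_\FF,\alpha^2_\FF$; the weighted EKR-type Theorem \ref{modekr} then bounds each link by $\binom{n-d-1}{k-d-1}$ plus a controlled multiple of $|\bigtriangledown^*_\FF(D)|$, and Lemmas \ref{maxbad} and \ref{rsmall} show the exceptional contributions, summed over all $D$, are small enough (this is where $d \ge 3$ enters). Summing the link bounds gives the exact inequality directly, and only then is an equality analysis run (via the families $\GG_d,\GG_{d+1}$ and Chv\'atal's $k = d+1$ case). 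If you want to salvage your approach, you would need to supply, from scratch, a stability theorem of strength comparable to the main theorem itself; as written, the proposal proves the theorem only in the two easy branches and assumes it, in disguised form, in the third.
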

\noindent A family containing no $d$-simplex must also contain no $d$-simplex-cluster. Thus, we get as an immediate corollary (when combined with results from \cite{vermub} for the case $d=2$) that Conjecture \ref{simpconj} holds for all values of $d,k$ and $n$ except for the very small values of $n$, where $n < 2k-d+2$.
\\\\
\noindent We take a moment here to discuss the range $n < 2k$. Intersection problems of this type tend to have a slightly different flavor when considered for these values of $n$. There are sometimes obvious reasons for this - the Erd\H os-Ko-Rado theorem in its original form, for example, does not make much sense when considered in this context because any two $k$-sets will intersect. As another example, the problem of clusters is different in this range because the union condition holds automatically, so a $d$-cluster-free family is simply a family with no $d+1$ sets that have empty intersection. There is, however, some history of results for problems of this type in the range $n < 2k$. The most notable example is perhaps the Complete Intersection Theorem of Ahlswede and Khachatrian \cite{ahlk}, which gives us a characterization of families where all elements intersect each other in at least $t$ places. For simplices, there are two results known to the author. The first is for $d=2$, when the full range of $n \ge 3k/2$ was shown in \cite{vermub}. The other example (and the only for general $d$) is from \cite{spernfam}, where the case of $n < k\frac{d}{d-1}$ is resolved.
\\\\
In our proof of Theorem \ref{mainthm}, we will use as one of our primary tools the following theorem, which was used in \cite{me} to resolve the question of $d$-clusters. To our knowledge, this theorem was shown first by Borg in \cite{borg1} using shadow techniques, and by Borg and Leader in \cite{borg2} using Katona's cycle method. It can also be seen as a specific case of a more general theorem from \cite{frankup} (Theorem $9$ with $c = \frac{n-k}{k}$). We include the proof for completeness.
\\\\
\begin{theorem}\label{modekr}
Let $n \ge 2k$, and suppose $\FF^* \subseteq \FF \subseteq {[n] \choose k}$ are such that $A,B \in \FF^*$ for any $A,B \in \FF$ with $A \cap B = \emptyset$. Then
$$|\FF| \le {n-1 \choose k-1} + \frac{n-k}{n}|\FF^*|,$$
where, for $n > 2k$, equality is achieved only if $\FF^* = \FF = {[n] \choose k}$ or if $\FF^*= \emptyset$ and $\FF$ is a maximum sized star.
\end{theorem}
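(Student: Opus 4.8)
The plan is to deduce Theorem~\ref{modekr} from Katona's cyclic permutation method. Set $\mathcal{G}:=\FF\setminus\FF^*$; the hypothesis on $\FF^*$ says precisely that every member of $\mathcal{G}$ meets every member of $\FF$, so in particular $\mathcal{G}$ is an intersecting family and is cross-intersecting with $\FF^*$. Since $\binom{n-1}{k-1}=\tfrac{k}{n}\binom{n}{k}$ and $|\FF|=|\mathcal{G}|+|\FF^*|$, the claimed bound is equivalent, after clearing denominators, to
$$\tfrac{n}{k}\,|\mathcal{G}|+|\FF^*|\le\binom{n}{k},$$
and it is this form I would prove. Call a $k$-set an \emph{arc} of a given cyclic order of $[n]$ if its elements are consecutive in that order; there are $n$ arcs per cyclic order, and each $k$-set is an arc in exactly $k!(n-k)!$ of the $(n-1)!$ cyclic orders. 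Writing $g$ and $h$ for the numbers of arcs of a fixed cyclic order lying in $\mathcal{G}$ and in $\FF^*$, the displayed inequality follows by averaging once one establishes the local estimate $\tfrac{n}{k}g+h\le n$.

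To prove the local estimate, the case $g=0$ is trivial since then it reads $h\le n$. If $g\ge 1$, the $g$ arcs in $\mathcal{G}$ are pairwise intersecting, so by the standard fact that pairwise-intersecting arcs of length $k$ on a circle with $n\ge 2k$ points have a common point, they occupy a block of at most $k$ consecutive arcs; writing $\delta$ for the gap between the two extreme ones, $g\le\delta+1\le k$. An arc of $\FF^*$ must meet both extreme $\mathcal{G}$-arcs, which confines its index to an interval of $2k-1-\delta$ consecutive values and, one checks, then forces it to meet \emph{every} $\mathcal{G}$-arc; since $\mathcal{G}$ and $\FF^*$ are disjoint collections and the $\mathcal{G}$-arcs also lie in this interval, $g+h\le 2k-1-\delta$. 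Substituting $h\le 2k-1-\delta-g$ and $g\le\delta+1$ and using $n\ge 2k$, a short computation gives $\tfrac{n}{k}g+h\le(\tfrac{n}{k}-2)(\delta+1)+2k\le n$. Averaging over all $(n-1)!$ cyclic orders then gives the displayed inequality, hence Theorem~\ref{modekr}.

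For the equality characterization (when $n>2k$), equality forces $\tfrac{n}{k}g+h=n$ on every cyclic order, and since now $\tfrac{n}{k}-2>0$ the chain above pins each cyclic order down to one of two types: either all $k$ arcs through some point lie in $\mathcal{G}$ and no arc lies in $\FF^*$, or all $n$ arcs lie in $\FF^*$. Two cyclic orders differing by a transposition of two consecutive elements share all but two of their $n$ arcs, so one of each type cannot be adjacent; as cyclic orders are connected under such transpositions, one type occurs throughout. In the first case no $k$-set lies in $\FF^*$, whence $\FF^*=\emptyset$ and $\FF=\mathcal{G}$ is intersecting of size $\binom{n-1}{k-1}$, so a maximum-sized star by Theorem~\ref{oldekr}; in the second case $\FF^*$ contains every $k$-set, so $\FF^*=\FF=\binom{[n]}{k}$.

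The crux I expect to be the local estimate, and in particular the quantitative trade-off that a large intersecting collection of arcs is forced to be spread out (large $\delta$), which is precisely what keeps the number of $\FF^*$-arcs small; extracting the sharp constant $\tfrac{n}{k}$ rather than a weaker one hinges on exploiting this together with $n\ge 2k$. A secondary delicate point is the equality analysis, where the two possible per-circle pictures must be reconciled globally using connectivity of cyclic orders.
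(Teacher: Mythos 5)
Your overall strategy is the same as the paper's (Katona's cycle method, the reformulation $\tfrac nk|\FF\setminus\FF^*|+|\FF^*|\le{n\choose k}$, a per-cycle estimate, averaging), but your proof of the local estimate rests on a false structural claim. It is not true that pairwise-intersecting arcs of length $k$ on a circle with $n\ge 2k$ points have a common point, nor that they occupy a block of at most $k$ consecutive starting positions. For $n=2k$ take $\{1,2,3\},\{3,4,5\},\{5,6,1\}$ in $[6]$; and the failure persists with $n>2k$: for $n=9$, $k=4$ the arcs $\{1,2,3,4\},\{4,5,6,7\},\{7,8,9,1\}$ pairwise intersect, have empty common intersection, and their starting points are spread evenly around the circle. (The Helly-type statement you invoke only becomes valid once $n\ge 3k-2$, far from the range $n\ge 2k$ needed here; and the configuration is realizable under the theorem's hypotheses, e.g.\ $\FF$ equal to exactly those three sets with $\FF^*=\emptyset$, so the cross-intersection with $\FF^*$ does not rescue it.) Consequently ``the two extreme $\mathcal{G}$-arcs'' and the gap $\delta$ are not well defined, and both inequalities you build on, $g\le\delta+1$ and $g+h\le 2k-1-\delta$, have no justification; the local estimate $\tfrac nk g+h\le n$ is in fact true, but your derivation of it collapses, and with it the equality dichotomy you extract from the same chain.

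The repair (which is what the paper does) avoids any global structure claim about the intersecting arcs: fix one arc $A\in S_\sigma(\FF\setminus\FF^*)$ and note that \emph{every} arc of $S_\sigma(\FF)$ must meet $A$, since a disjoint pair inside $\FF$ would force $A\in\FF^*$. Hence all other arcs of $S_\sigma(\FF)$ start at one of $2(k-1)$ positions, which split into $k-1$ pairs $\{i,i+k\}$ of mutually disjoint arcs. Each pair can contain at most one arc of $\FF\setminus\FF^*$ (two would be a disjoint pair in $\FF$ outside $\FF^*$), and a pair containing an arc of $\FF\setminus\FF^*$ contains no arc of $\FF$ at the partner position at all, in particular no $\FF^*$-arc. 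This gives $g\le k$ and, when $g\ge1$, $h\le 2(k-g)$, so $\tfrac nk g+h\le(\tfrac nk-2)g+2k\le n$, with equality for $n>2k$ only if $g=k,h=0$ or $g=0,h=n$. With that corrected dichotomy your averaging step and your connectivity-of-cyclic-orders equality analysis do go through; the latter is a reasonable alternative to the paper's finish, which instead observes directly that any two members of $\FF$ can be made arcs of a single cyclic order and hence lie both in $\FF^*$ or both in $\FF\setminus\FF^*$.
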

\noindent Note that this theorem is itself a generalization of EKR; if one sets $\FF^* = \emptyset$, then Theorem \ref{modekr}  says simply that a pairwise intersecting family has size at most ${n-1 \choose k-1}$. 

\begin{proof}
We will proceed by the Katona cycle method \cite{katcyc}. First, we let $C(n)$ denote the set of all cyclic permutations on $n$ elements. Then, if we have $(a_0,\dots,a_{n-1}) = \sigma \in C(n)$ and $\GG \subset {[n] \choose k}$, we define (with all subscripts henceforth taken mod $n$)
$$S_\sigma(\GG) := \{A \in \GG : A = \{a_i,a_{i+1},\dots,a_{i+(k-1)}\} \text{ for some } i \in [0,n-1]\}.$$
Observe trivially that $|S_\sigma(\GG)| \le n$. Furthermore, for any such $A = \{a_i,\dots,a_{i+(k-1)}\}$, we say that $A$ has \emph{starting point} $i$ in $\sigma = (a_0,\dots,a_{n-1})$. Now, we wish to prove the following:
\begin{enumerate}
\item $|S_\sigma(\FF \setminus \FF^*)| \le k$ for all $\sigma \in C(n)$
\item if $S_\sigma(\FF \setminus \FF^*) \neq \emptyset$, then $|S_\sigma(\FF^*)| \le 2(k-|S_\sigma(\FF \setminus\FF^*)|)$ for all $\sigma \in C(n)$
\end{enumerate}
Let $\sigma = (a_0,\dots,a_{n-1})$ as before, suppose $S_\sigma(\FF \setminus \FF^*) \neq \emptyset$, and take $A \in S_\sigma(\FF \setminus \FF^*)$. Furthermore, suppose without loss of generality that $A = \{a_0,\dots,a_{k-1}\}$. Then, let $A' \in (S_\sigma(\FF) \setminus \{A\})$ and observe that since $A \cap A' \neq \emptyset$, it follows that $A'$ has starting point in either $ [n-(k-1),n-1]$ or $[1,k-1]$. Suppose then that we have $A_1,A_2 \in (S_\sigma(\FF) \setminus \{A\})$ with starting points $i_1 \in [n-(k-1),n-1]$ and $(i_1 + k) \in [1,k-1]$ in $\sigma$ respectively. Since $n \ge 2k$ this implies $A_1 \cap A_2 = \emptyset$ and thus that $A_1,A_2 \in \FF^*$. Because only one element of $\FF$ may have a given starting point in $\sigma$, we can combine these facts to get both (i) and (ii). Now, we define subsets of $C(n)$
$$C_j := \{\sigma \in C(n) : |S_\sigma(\FF \setminus \FF^*)| = j\},$$
and using (i) we observe that $C_0,C_1,\dots,C_k$ partition $C(n)$. Using (i) and (ii), and since every  $A \in \FF$ is in $S_\sigma(\FF)$ for precisely $k!(n-k)!$ different $\sigma \in C(n)$ we get
$$|\FF \setminus \FF^*|k!(n-k)! = \sum_{\sigma \in C(n)}|S_\sigma(\FF \setminus \FF^*)| = \sum_{1\le i \le k}|C_i|i$$
and
$$|\FF^*|k!(n-k)! = \sum_{\sigma \in C(n)}|S_\sigma(\FF^*)|  \le n|C_0| + \sum_{1 \le i \le k} |C_i| 2(k-i).$$
Combining these yields
\begin{align}
|\FF^*| + \left(\frac{n}{k}\right)|\FF \setminus \FF^*| &\le \frac{ \Big(n|C_0| + \sum_{i=1}^k 2(k-i)|C_i|\Big) + (n/k)\Big(\sum_{i=1}^k i|C_i|\Big)}{k!(n-k)!} \nonumber\\
&=\frac{n|C_0| + n|C_k| + \sum_{i=1}^{k-1}\frac{in+2k(k-i)}{k}|C_i|}{k!(n-k)!}\label{compline}.
\end{align}
A quick calculation gives us that, for all $1 \le i \le k-1$
\begin{align}
\frac{in + 2k(k-i)}{k} \le \frac{in + n(k-i)}{k}= n, \label{eqline2}
\end{align}
with equality only if $n = 2k$. Combining (\ref{compline}) and (\ref{eqline2}), since $|C_0| + \dots + |C_k| = |C(n)| =  (n-1)!$, we get
\begin{align}
\left(\frac{k-n}{k}\right)|\FF^*| + \left(\frac{n}{k}\right) |\FF|&= |\FF^*| + \Big(\frac{n}{k}\Big)|\FF \setminus \FF^*| \nonumber\\
&\le \frac{n(|C_0| + \dots + |C_k|)}{k!(n-k)!}\nonumber\\
&= \frac{n!}{k!(n-k)!}\nonumber \\
&= {n \choose k}. \nonumber
\end{align}
Dividing both sides by $n/k$ we get our desired inequality. Now, suppose $n > 2k$ and we have equality. Note that in this case we do not have equality in (\ref{eqline2}) and so $C(n) = C_0 \cup C_k$. Furthermore, if we take arbitrary $A,A' \in \FF$, we can easily construct $\sigma \in C(n)$ such that $A,A' \in S_\sigma(\FF)$. Since either $\sigma \in C_0$ or $\sigma \in C_k$, this implies that $A,A' \in \FF^*$ or $A,A' \in (\FF \setminus\FF^*)$. Since $A$ and $A'$ were arbitrary, we get that either $\FF = \FF^*$ or $\FF = (\FF \setminus \FF^*)$. If we assume the former then $|\FF| = |\FF^*| = {n \choose k}$ in which case $\FF = {[n] \choose k}$. For the latter, we get that $|\FF| = |\FF \setminus \FF^*| = {n-1 \choose k-1}$ and $\FF$ is pairwise intersecting, in which case Theorem \ref{oldekr} tells us that $\FF$ is a star. This completes the proof.
\end{proof}

\noindent In the remainder of the paper, we will use the following notation.

\begin{definition}
Suppose $\FF \subseteq {[n] \choose k}$ and we have $A \in \FF$ and $D \subseteq [n]$. Then, we define $\bigtriangledown_\FF (D) \subseteq \FF$ and $\alpha_\FF^i(A) \subseteq A$ to be
$$\bigtriangledown_\FF(D) := \{B \in \FF : D \subseteq B\}$$
and
$$\alpha_\FF^i(A) := \{x \in A : \bigtriangledown_\FF(A \setminus \{x\}) = i\}.$$
\end{definition}
\noindent The first definition is related to the common combinatorial notion of link or trace - that is, $\bigtriangledown_\FF(D)$ is all elements of $\FF$ that contain $D$. The second definition can be thought of as a measure of the removability of the elements of $A \in \FF$. By this we mean that, if we have $x \in \alpha_\FF^i(A)$ for some $i \ge 2$, then we can remove $x$ from $A$ without increasing its size very much - that is, we can find $B \in \FF$ such that $x \notin B$ but $|A \cup B|$ is small. Furthermore, if $i \ge 3$ we have greater flexibility in choosing $B$ that we will leverage later on. These are useful notions because they provide a way to construct $d$-simplex-clusters in a way that is both controlled and relatively easy to count.
\\\\
\noindent The following lemmas will make this more precise. The first shows us that if we have $A,B \in \FF$ such that $A \cap B$ satisfies certain size and removability conditions, then $\FF$ must contain a $d$-simplex-cluster.
\begin{lemma}\label{basiclem}
Suppose $d+1 \le k$ and $n \ge 2k-d$, and let $\FF \subseteq {[n] \choose k}$. Then, if there exist $A,B\in \FF$ such that $A \cap B \in {A \setminus \alpha_\FF^1(A) \choose d} \setminus {\alpha_\FF^2(A) \choose d}$, then $\FF$ must contain a $d$-simplex-cluster. 
\end{lemma}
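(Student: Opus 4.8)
The plan is to build a $d$-simplex-cluster explicitly out of $B$ together with $d$ well-chosen ``neighbors'' of $A$. Put $T:=A\cap B$, so $|T|=d$. Since $T\in{A\setminus\alpha_\FF^1(A) \choose d}$ we have $|\bigtriangledown_\FF(A\setminus\{x\})|\ge 2$ for every $x\in T$, and since $T\notin{\alpha_\FF^2(A) \choose d}$ some $x\in T$ lies outside $\alpha_\FF^2(A)$; calling this element $x_1$, the value $|\bigtriangledown_\FF(A\setminus\{x_1\})|$ is neither $1$ (because $x_1\notin\alpha_\FF^1(A)$) nor $2$, hence it is at least $3$. Write $T=\{x_1,\dots,x_d\}$.

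First I would choose the neighbors. For each $i$ with $2\le i\le d$, pick any $A_i\in\bigtriangledown_\FF(A\setminus\{x_i\})$ with $A_i\ne A$; since $|A_i|=k=|A|$ and $A\setminus\{x_i\}\subseteq A_i$, this forces $A_i=(A\setminus\{x_i\})\cup\{y_i\}$ for a unique $y_i\in[n]\setminus A$. For $i=1$, the set $\bigtriangledown_\FF(A\setminus\{x_1\})\setminus\{A\}$ has at least two members, hence offers at least two candidate values of $y_1$, so I can pick $A_1=(A\setminus\{x_1\})\cup\{y_1\}$ with $y_1\ne y_2$; in particular $y_1,\dots,y_d$ are not all equal. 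Finally set $A_{d+1}:=B$.

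Then I would verify the four requirements for $\{A_1,\dots,A_{d+1}\}$. \emph{Distinctness:} $A_i=A_j$ with $i\ne j$ and $i,j\le d$ would give $A_i\supseteq(A\setminus\{x_i\})\cup(A\setminus\{x_j\})=A$, so $A_i=A$, a contradiction; and $A_i\ne B$ since $|A_i\cap A|=k-1$ while $|B\cap A|=d$ (these differ unless $k=d+1$, in which case $A_i\cap A=A\setminus\{x_i\}$ with $x_i\in T$, whereas $B\cap A=T=A\setminus\{z\}$ with $z\notin T$), and $B\ne A$ as $|A\cap B|=d<k$. \emph{Simplex conditions:} from $A_i\cap A=A\setminus\{x_i\}$, $A_i\setminus A=\{y_i\}$, and the $y_i$ not all equal, one gets $\bigcap_{i=1}^d A_i=A\setminus T$, so $\bigcap_{i=1}^{d+1}A_i=(A\setminus T)\cap B=\emptyset$ since $A\cap B=T$; removing $A_j$ for $j\le d$ leaves $x_j$ in the intersection (as $x_j\in A\setminus\{x_i\}\subseteq A_i$ for every $i\ne j$ with $i\le d$, and $x_j\in T\subseteq B$); removing $B$ leaves $A\setminus T$, which is nonempty because $k\ge d+1$. \emph{Cluster condition:} $\bigcup_{i=1}^{d+1}A_i\subseteq(A\cup B)\cup\{y_1,\dots,y_d\}$ has size at most $(2k-d)+d=2k$. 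Hence $\{A_1,\dots,A_{d+1}\}\subseteq\FF$ is a $d$-simplex-cluster.

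The only step needing genuine care is the identity $\bigcap_{i=1}^{d+1}A_i=\emptyset$: if all the swap-ins $y_i$ were equal to a common element that happens to lie in $B$, the total intersection would be that element rather than the empty set. Preventing this is exactly what the hypothesis $T\notin{\alpha_\FF^2(A) \choose d}$ buys us — the extra set through $A\setminus\{x_1\}$ that it guarantees lets us pick $y_1\ne y_2$. Everything else is routine set bookkeeping, and I do not expect the cluster inequality $|\bigcup A_i|\le 2k$ to cause any trouble.
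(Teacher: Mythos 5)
Your proof is correct and takes essentially the same route as the paper's: you swap each element of $A\cap B$ out of $A$ one at a time (possible since $A\cap B\subseteq A\setminus\alpha_\FF^1(A)$), use the element lying outside $\alpha_\FF^2(A)$ (your $x_1$, the paper's $x_d$) to ensure the replacement elements are not all equal so that the $d$ new sets intersect exactly in $A\setminus B$, and then adjoin $B$, with the same verification of the intersection and union conditions. Your explicit distinctness check is a minor addition that the paper leaves implicit.
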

\begin{proof}
Let $A,B$ be as described, with $A \cap B = \{x_1,\dots,x_{d}\}$, and suppose without loss of generality that $x_{d} \in \alpha_\FF^i(A)$ for some $i \ge 3$.  Then, for all $1 \le j \le d$, since $x_j \in \alpha_\FF^i(A)$ for $i \ge 2$, there exists $B_j \in \FF$ such that $A \cap B_j = A \setminus \{x_j\}$. Note at this point that $B_1,\dots,B_{d}$ may have an element (at most one) of intersection outside of $A$. However, if this is the case, since $x_{d} \in \alpha_\FF^i(A)$ for $i \ge 3$, we can re-choose $B_{d}$ such that the $B_1,\dots,B_{d}$ have empty intersection outside of $A$. We claim that $B,B_1,\dots,B_d$ is a $d$-simplex-cluster. Verifying first the intersection condition gives
$$B \cap B_1 \cap \dots \cap B_{d} =  (A \cap B) \cap B_1 \cap \dots \cap B_{d} = \emptyset, $$
and furthermore
$$|B \cup B_1 \cup \dots \cup B_{d}| \le |A \cup B| + |B_1 \setminus A| + \dots + |B_d \setminus A| \le (2k - d)+d = 2k.$$
Finally, we see that $B_1 \cap \dots \cap B_{d} = A \setminus B \neq \emptyset$ and that $x_j \in \left(B \cap \left( \bigcap_{j \neq i} B_j\right)\right)$ for all $1 \le j \le d$. Thus, $B,B_1,\dots,B_{d}$ is a $d$-simplex-cluster, completing the proof.
\end{proof}
\noindent Thus, our task reduces in some sense to proving that $\alpha_\FF^1(A)$ and $\alpha_\FF^2(A)$ are small for most $A \in \FF$. The following lemma will be used to show this.
\begin{lemma}\label{maxbad} 
Suppose $k < n$ and that $\FF \subseteq {[n] \choose k}$ is such that $|\FF| \ge {n-1 \choose k-1}$. Then
$$\sum_{A \in \FF} |\alpha_\FF^1(A)| + \frac{n-k-1}{2(n-k)}|\alpha_\FF^2(A)| \le {n-1 \choose k-1}.$$
\end{lemma}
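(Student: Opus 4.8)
The plan is to reformulate the left-hand side as a sum over $(k-1)$-element subsets of $[n]$, indexed by how many members of $\FF$ lie above them, and then dominate the resulting quantity by a single affine inequality in that index.

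First, for each integer $j \ge 0$ I would let $N_j$ denote the number of sets $D \in \binom{[n]}{k-1}$ with $|\bigtriangledown_\FF(D)| = j$. Counting pairs $(A,x)$ with $A \in \FF$ and $x \in A$ according to the value of $D := A \setminus \{x\}$, one observes that each $D$ with $|\bigtriangledown_\FF(D)| = j$ is hit by exactly $j$ such pairs (one for each member of $\FF$ lying above $D$), and that $x \in \alpha_\FF^i(A)$ holds precisely when $|\bigtriangledown_\FF(A \setminus \{x\})| = i$. This yields the identities
$$\sum_{A \in \FF}|\alpha_\FF^1(A)| = N_1, \qquad \sum_{A \in \FF}|\alpha_\FF^2(A)| = 2N_2, \qquad \sum_{j \ge 0} j\,N_j = k|\FF|,$$
together with $\sum_{j \ge 0} N_j = \binom{n}{k-1}$ since every $(k-1)$-subset of $[n]$ contributes to exactly one $N_j$. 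Hence the left-hand side of the Lemma equals $N_1 + \tfrac{n-k-1}{n-k} N_2 = \sum_{j \ge 0} c_j N_j$, where $c_1 = 1$, $c_2 = \tfrac{n-k-1}{n-k}$, and $c_j = 0$ for all other $j$.

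The key point is the elementary observation that $\bigtriangledown_\FF(D) \subseteq \{D \cup \{y\} : y \in [n]\setminus D\}$, so $|\bigtriangledown_\FF(D)| \le n-k+1$ always; thus only indices $0 \le j \le n-k+1$ occur. For each such $j$ I would verify directly the affine bound
$$c_j \le \frac{n-k+1-j}{n-k},$$
which holds with equality at $j = 1$ and $j = 2$, is trivial at $j = 0$, and for $3 \le j \le n-k+1$ reduces to $j \le n-k+1$. Multiplying by $N_j \ge 0$, summing, and substituting $\sum_j N_j = \binom{n}{k-1}$ and $\sum_j jN_j = k|\FF|$, and finally invoking the hypothesis $|\FF| \ge \binom{n-1}{k-1}$ (legitimate since the coefficient of $|\FF|$ is negative), I obtain
$$\sum_{j \ge 0} c_j N_j \le \frac{(n-k+1)\binom{n}{k-1} - k|\FF|}{n-k} \le \frac{(n-k+1)\binom{n}{k-1} - k\binom{n-1}{k-1}}{n-k}.$$
A short computation using the identity $(n-k+1)\binom{n}{k-1} = n\binom{n-1}{k-1}$ collapses the right-hand side to exactly $\binom{n-1}{k-1}$, finishing the proof.

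I do not anticipate a serious obstacle: the only genuinely ``designed'' ingredient is the exact slope $\tfrac{1}{n-k}$ of the affine majorant of $(c_j)$, which is forced by demanding equality at $j = 1$ and $j = 2$ at once, and which is exactly what makes the constant come out to $\binom{n-1}{k-1}$ rather than something larger. The points I would be most careful about are the boundary index $j = n-k+1$ (where the bound must still read $0 \le 0$, using $|\bigtriangledown_\FF(D)| \le n-k+1$) and the direction of the last inequality when clearing the positive denominator $n-k$.
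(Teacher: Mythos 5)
Your proof is correct and is essentially the paper's own argument in different bookkeeping: writing $N_j$ for the number of $(k-1)$-sets with exactly $j$ supersets in $\FF$, your affine majorant $c_j \le \frac{n-k+1-j}{n-k}$, cleared of its denominator, is precisely the paper's step of discarding all but the $i=n-k$ and $i=n-k-1$ terms in $k|\FF^C| = \sum_{D}|\bigtriangledown_{\FF^C}(D)|$, since $\sum_j (n-k+1-j)N_j = k|\FF^C|$. Both arguments then close identically, using $(n-k+1)\binom{n}{k-1} = n\binom{n-1}{k-1}$ together with the hypothesis $|\FF| \ge \binom{n-1}{k-1}$.
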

\begin{proof}
To start, we define $\FF^C := {[n] \choose k} \setminus \FF$, and observe that
$$|\FF|k + |\FF^C|k = {n \choose k}k = n{n-1 \choose k-1}.$$
Using our assumption that $|\FF| \ge {n-1 \choose k-1}$, we see that
\begin{align}
(n-k){n-1 \choose k-1} &\ge k|\FF^C| \nonumber\\
&= \sum_{A \in \FF^C} \sum_{1 \le i \le (n-k+1)} |\alpha_{\FF^C}^i(A)| \nonumber \\
&\ge \sum_{A \in \FF^C} |\alpha_{\FF^C}^{n-k}(A)| + |\alpha_{\FF^C}^{n-k-1}(A)|\nonumber \\
&=\sum_{A \in \FF} (n-k)|\alpha_\FF^1(A)| + \frac{n-k-1}{2}|\alpha_\FF^2(A)|,\nonumber
\end{align}
which is the desired result.
\end{proof}
\noindent Having shown that $\alpha_\FF^1(A)$ and $\alpha_\FF^2(A)$ are small for most $A \in \FF$, we will want to use this in conjunction with Lemma \ref{basiclem}. However, Lemma \ref{basiclem} is a statement about $d$-subsets of $A$, while Lemma \ref{maxbad} is about single elements of $A$. To bridge the gap between these two results, we use the following counting lemma.
\begin{lemma}\label{rsmall}
Suppose $d+1 \le k < n$ and $\FF \subseteq {[n] \choose k}$. Then, we have
$$\sum_{A \in \FF}\Bigg({k \choose d} - {|A \setminus \alpha_\FF^1(A)| \choose d} + {|\alpha_\FF^2(A)| \choose d}\Bigg) \le {k-1 \choose d-1}\sum_{A \in \FF}\left(|\alpha_\FF^1(A)| + \frac{|\alpha_\FF^2(A)|}{d} \right).$$
\end{lemma}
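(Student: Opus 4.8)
The plan is to reduce the inequality to a pointwise statement about each $A \in \FF$. Both sides are sums over $A \in \FF$, so it suffices to prove that for each fixed $A$, setting $a_1 = |\alpha_\FF^1(A)|$ and $a_2 = |\alpha_\FF^2(A)|$ — so that $|A \setminus \alpha_\FF^1(A)| = k - a_1$, and $0 \le a_1, a_2 \le k$ since both $\alpha_\FF^1(A)$ and $\alpha_\FF^2(A)$ are subsets of $A$ — one has
\[
{k \choose d} - {k - a_1 \choose d} + {a_2 \choose d} \;\le\; {k-1 \choose d-1}\left(a_1 + \frac{a_2}{d}\right).
\]
Summing this over all $A \in \FF$ then yields the lemma.

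To establish the pointwise inequality I would bound the two relevant left-hand terms separately. For the first, telescope using Pascal's identity ${m \choose d} - {m-1 \choose d} = {m-1 \choose d-1}$:
\[
{k \choose d} - {k - a_1 \choose d} = \sum_{j=1}^{a_1} {k-j \choose d-1} \le a_1 {k-1 \choose d-1},
\]
where the last step uses that binomial coefficients are non-decreasing in their upper argument (and the case $a_1 = 0$ is trivial, both sides being $0$). For the second, write ${a_2 \choose d} = \frac{a_2}{d}{a_2 - 1 \choose d-1}$ — valid when $a_2 \ge 1$, and trivially true when $a_2 = 0$ — and use $a_2 \le k$ to get ${a_2 - 1 \choose d-1} \le {k-1 \choose d-1}$, hence ${a_2 \choose d} \le \frac{a_2}{d}{k-1 \choose d-1}$. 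Adding the two estimates gives exactly the claimed pointwise bound.

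I do not expect a genuine obstacle here: the whole argument rests only on Pascal's identity and the monotonicity of binomial coefficients. The only points needing a little care are the degenerate cases $a_1 = 0$ and $a_2 < d$ (where ${a_2 \choose d} = 0$), all handled by the conventions above, together with the fact that $d \ge 1$ — guaranteed by $d+1 \le k$ — so that ${k-1 \choose d-1}$ and the factor $1/d$ are meaningful. It is perhaps worth remarking that the resulting bound is tight (for instance, when $a_1 = 0$ and $a_2 = k$ both sides coincide), which is consistent with the role of this lemma as a bridge between Lemma \ref{basiclem} and Lemma \ref{maxbad}.
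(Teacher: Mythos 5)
Your proposal is correct and follows essentially the same route as the paper: a telescoping application of Pascal's identity to bound ${k \choose d} - {|A \setminus \alpha_\FF^1(A)| \choose d}$ by $|\alpha_\FF^1(A)|{k-1 \choose d-1}$, together with the bound ${|\alpha_\FF^2(A)| \choose d} \le \frac{|\alpha_\FF^2(A)|}{d}{k-1 \choose d-1}$ (the paper writes this as $\frac{|\alpha_\FF^2(A)|}{k}{k \choose d}$, which is the same quantity). Stating it pointwise for each $A$ before summing is only a cosmetic difference.
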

\begin{proof}
We use here the fact that if $m_1,m_2,\ell \in \NN$, then ${m_1 \choose \ell} - {m_2 \choose \ell} = {m_1-1 \choose \ell-1} + {m_1-2 \choose \ell-1} + \dots + {m_2 \choose \ell-1}$, as well as the fact that $|\alpha_\FF^1(A)|,|\alpha_\FF^2(A)| \le k$ for all $A \in \FF$. This yields
\begin{align}
\sum_{A \in \FF}\Bigg({k \choose d} - {|A \setminus \alpha_\FF^1(A)| \choose d} + {|\alpha_\FF^2(A)| \choose d}\Bigg) & \le \sum_{A \in \FF}\Bigg(\sum_{i = 1}^{|\alpha_\FF^1(A)|}{k-i \choose d-1}+ \frac{|\alpha_\FF^2(A)|}{k}{k \choose d}\Bigg) \nonumber\\
&\le \sum_{A \in \FF} \Bigg(|\alpha_\FF^1(A)|{k-1 \choose d-1} + \frac{|\alpha_\FF^2(A)|}{d}{k-1 \choose d-1}\Bigg),\nonumber
\end{align}
thus completing the proof.
\end{proof}
\noindent We may now proceed with the proof of our main result.

\begin{proof}[Proof of Theorem \ref{mainthm}]
Let $|\FF| \ge {n-1 \choose k-1}$ and suppose $\FF$ contains no $d$-simplex-cluster. Then, for any $D \in {[n] \choose d}$, we define the following subset of $\bigtriangledown_\FF(D)$,
$$\bigtriangledown^*_\FF(D) := \{A \in \bigtriangledown_\FF(D) : D \cap \alpha_\FF^1(A) \neq \emptyset \text{ or } D \subseteq \alpha_\FF^2(A)\}.$$
Now, suppose we have $A_1, A_2 \in \bigtriangledown_\FF(D)$ such that $A_1 \cap A_2 = D$ and observe that by Lemma \ref{basiclem} and the fact that $\FF$ contains no $d$-simplex-cluster, we have $A_1,A_2 \in \bigtriangledown^*_\FF(D)$. Thus, we may apply Theorem \ref{modekr} with $\{(A\setminus D) : A \in \bigtriangledown_\FF(D)\}$ as $\FF$ and $\{(A\setminus D) : A \in \bigtriangledown^*_\FF(D)\}$ as $\FF^*$ to get
\begin{align}|\bigtriangledown_\FF(D)|  \le {n-d-1 \choose k-d-1} + \frac{n-k}{n-d} |\bigtriangledown^*_\FF(D)|. \nonumber\end{align}
Summing over all $D \in {[n] \choose d}$, and using Lemmas \ref{maxbad} and \ref{rsmall}, we obtain
\begin{align}
|\FF|{k \choose d} &= \sum_{D \in {[n] \choose d}} |\bigtriangledown_\FF(D)|  \nonumber\\
 &\le {n-d-1 \choose k-d-1}{n \choose d} + \frac{n-k}{n-d}\sum_{D \in {[n] \choose d}}|\bigtriangledown^*_\FF(D)|\label{impline1}\\
&= {n-d-1 \choose k-d-1}{n \choose d} + \frac{n-k}{n-d}\sum_{A \in \FF}\left({k \choose d} - {|A \setminus \alpha_\FF^1(A)| \choose d} + {|\alpha_\FF^2(A)| \choose d}\right) \nonumber\\
&\le {n-1 \choose k-1}{k \choose d}\frac{n(k-d)}{k(n-d)} + \frac{n-k}{n-d}{k-1 \choose d-1}{n-1 \choose k-1} \label{hardeq}\\
&={n-1 \choose k-1}{k \choose d} \Bigg(\frac{n(k-d)}{k(n-d)} + \frac{d(n-k)}{k(n-d)}\Bigg) \nonumber\\
&= {n-1 \choose k-1} {k \choose d}, \nonumber
\end{align}
which is our desired inequality. Note that in (\ref{hardeq}) we have also used that $d \ge 3$ and $n \ge 2k-d+2 \ge k+3$. Now, suppose that we have equality, and in particular that we have equality in (\ref{impline1}). We wish to show that $\FF$ is a star. To start, for every $1 \le \ell \le k$, we define $\GG_\ell \subseteq {[n] \choose \ell}$ as follows
$$\GG_\ell := \{D \in {[n] \choose \ell} : \bigtriangledown_\FF(D) = \bigtriangledown_{[n] \choose k}(D)\}.$$
The proof will proceed as follows: we will start by showing that $|\GG_d| \ge {n-1 \choose d-1}$ and use this to show that $|\GG_{d+1}| \ge {n-1 \choose d}$. Then, we will show that $\GG_{d+1}$ is $d$-simplex-free, and use this to show that it is a star. This will show by extension that $\FF$ is a star.
\\\\
We start by showing that $|\GG_d| \ge {n-1 \choose d-1}$. To see this, observe that since $n-d > 2(k-d)$, equality in (\ref{impline1}) implies that, for any $D \in {[n] \choose d}$, we have either that $\bigtriangledown_\FF(D)$ is a maximum-sized $(d+1)$-star or all of $\bigtriangledown_{[n] \choose k}(D)$. In particular, this implies that $|\bigtriangledown_\FF(D)| = {n-d \choose k-d}$ for all $D \in \GG_d$ and $|\bigtriangledown_\FF(D)| = {n-d-1 \choose k-d-1}$ for all $D \in {n \choose d } \setminus \GG_d$. Suppose for the sake of contradiction that $|\GG_d| < {n-1 \choose d-1}$. Then, we get
\begin{align}
|\FF|{ k \choose d} &= |\GG_d|{n-d \choose k-d} + \left({n \choose d} - |\GG_d|\right){n-d-1 \choose k-d-1} \nonumber\\
&< {n-1 \choose d-1}{n-d \choose k-d} + {n-1 \choose d}{n-d-1 \choose k-d-1} \nonumber\\
&={n-1 \choose k-1} {k \choose d}, \nonumber
\end{align}
which is a contradiction, so $|\GG_d| \ge {n-1 \choose d-1}$. Now, we will show that $|\GG_{d+1}| \ge {n-1 \choose d}$ by a double-counting argument. Observe that if $D \in \GG_d$ and $x \in [n] \setminus D$, then $(D\cup \{x\}) \in \GG_{d+1}$. Furthermore, as noted before, for every $D \in {[n] \choose d} \setminus \GG_d$, we have that $\bigtriangledown_\FF(D)$ is a maximum-sized $(d+1)$-star. Thus, in this case there exists exactly one $x \in [n] \setminus D$ such that $(D \cup \{x\}) \in \GG_{d+1}$. Finally, any element of $\GG_{d+1}$ will be counted in this way precisely $d+1$ times, giving us
\begin{align}
|\GG_{d+1}| &\ge \frac{|\GG_d|(n-d) + \left({n \choose d} - |\GG_d|\right)}{d+1} \nonumber\\
&\ge \frac{{n-1 \choose d-1}(n-d) + {n \choose d} - {n-1 \choose d-1}}{d+1} \nonumber\\
&= {n-1 \choose d}. \nonumber
\end{align}
We show next that $\GG_{d+1}$ must contain no $d$-simplex. To see this, suppose for the sake of contradiction that $\GG_{d+1}$ contains a $d$-simplex $\{D_1,\dots,D_{d+1} \}$. We observe first that $\{D_1,\dots,D_{d+1}\}$ must in fact also be a $d$-cluster. To see the union condition, note that there must be $\{x_1,\dots,x_{d+1}\} \subset [n]$ such that $x_j \in D_i$ for all $i \neq j$. By extension we see $|D_i \setminus \{x_1,\dots,x_{d+1}\}| = 1$, and it follows easily that $|\bigcup_i D_i| \le 2(d+1)$. Next, we choose (not necessarily distinct) $(k-d-1)$-sets $E_1,\dots,E_{d+1} \subseteq [n] \setminus (\bigcup_i D_i)$ such that $\bigcap_i E_i = \emptyset$ and $|\bigcup_i E_i| \le 2(k-d-1)$. Then, because $D_i \in \GG_{d+1}$ for all $1 \le i \le d+1$, it follows that $(D_i \cup E_i) \in \FF$. We claim that $(D_1 \cup E_1),\dots,(D_{d+1} \cup E_{d+1})$ is a $d$-simplex-cluster. To verify this, we check first the union condition

$$\left|\bigcup_i (D_i \cup E_i)\right| = \left|\bigcup_i D_i\right| + \left|\bigcup_i E_i\right| \le 2(d+1) + 2(k-d-1) = 2k,$$
and the first intersection condition
$$\bigcap_i(D_i \cup E_i) = \left(\bigcap_i D_i\right) \cup \left(\bigcap_i  E_i\right) = \emptyset,$$
and finally the second intersection condition
$$\bigcap_{j \neq i}(D_i \cup E_i) = \left(\bigcap_{j \neq i} D_i\right) \cup \left(\bigcap_{j \neq i} E_i\right) \supseteq \left(\bigcap_{j \neq i} D_i\right) \neq \emptyset.$$
However, this contradicts our assumption that $\FF$ is $d$-simplex-cluster-free, so $\GG_{d+1}$ must be $d$-simplex-free. However, the $d+1 = k$ case of Conjecture \ref{simpconj} was resolved by Chv\' atal in \cite{chva}. Since $|\GG_{d+1}| \ge {n-1 \choose d}$, this implies that $\GG_{d+1}$ is a star centered at some $x \in [n]$. We now count the number of elements of $\FF$ that contain $x$ by another double counting argument. For every $D \in \GG_{d+1}$ there will be ${n-d-1 \choose k-d-1}$ elements of $\FF$ that contain it. Furthermore, for any $A \in \FF$ that contains $x$, it will have ${k-1 \choose d}$ subsets of size $d+1$ that contain $x$. From this, we obtain

\begin{align}
|\bigtriangledown_\FF(\{x\})| \ge \frac{|\GG_{d+1}|{n-d-1 \choose k-d-1}}{{k-1 \choose d}} \ge \frac{{n-1 \choose d}{n-d-1 \choose k-d-1}}{{k-1 \choose d}}  = {n-1 \choose k-1}. \nonumber
\end{align}
Thus, $\FF$ is a star centered at $x$. This completes the proof.
\end{proof}

\noindent {\bfseries Acknowledgements:} I would like to thank the Pomona College research circle for introducing me to problems on clusters, and in particular to Shahriar Shahriari and Archer Wheeler for working with me in the early stages. I would also like to thank my advisors J\'{o}zsef Solymosi and Richard Anstee for their support, and in particular to Richard Anstee for many hours spent helping me edit. Finally, I would like to thank P\'{e}ter Frankl for helpful comments and for pointing out additional references, and to the referees for a careful reading and helpful comments.

\bibliographystyle{amsplain}
\bibliography{set_bib}
\end{document}